\documentclass[makeidx]{amsart}

\usepackage{a4wide}
\usepackage{latexsym}
\usepackage{amsmath}
\usepackage{amssymb}
\usepackage{amsthm}
\usepackage{amsbsy}
\usepackage[dvips]{graphicx}

\newcommand{\e}{\varepsilon}
\newcommand{\LEE}{\mathcal{L}}
\newcommand{\R}{\mathbb{R}}
\newcommand{\F}{\mathcal{F}}
\newcommand{\Mid}{\,\, \Big| \,\,}

\theoremstyle{plain}
\newtheorem*{thm}{Theorem}
\newtheorem*{prop}{Proposition}
\newtheorem*{lem}{Lemma}
\newtheorem*{TB}{Thurston-Bennequin inequality}
\newtheorem*{TBC}{Thurston-Bennequin inequality for convex hypersurfaces}

\title[Thurston-Bennequin inequality for a hypersurface]
{
On the violation of Thurston-Bennequin inequality 
for a certain non-convex hypersurface
}
\author[A.~Mori]
{
Atsuhide MORI
}

\address{Graduate~School~of~Science, Osaka~University, 
Toyonaka, Osaka~560-0043, Japan}

\email{ka-mori@ares.eonet.ne.jp}

\subjclass[2000]{Primary~57R17, 57R20, secondary~57R30}

\keywords{Contact structure, characteristic class, convex contact geometry}

\begin{document}

\begin{abstract}
We show that any open subset of a contact manifold of dimension greater than three 
contains a certain hypersurface $\Sigma$ 
which violates the Thurston-Bennequin inequality. 
We also show that no convex hypersurface smoothly approximates $\Sigma$.
These results contrasts with the $3$-dimensional case, where 
any surface in a small ball satisfies 
the inequality (Bennequin\cite{Bennequin}) 
and is smoothly approximated by a convex one (Giroux\cite{GirouxConvex}).
\end{abstract}
\maketitle
\section{Introduction and preliminaries}
Tightness is a fundamental notion
in $3$-dimensional contact topology.
It is characterized by the Thurston-Bennequin inequality
(see \S 1.1). Roughly, for a Seifert (hyper)surface 
$\Sigma$ in a contact manifold, this inequality 
compares the contact structure along $\Sigma$
with the tangent bundle of $\Sigma$ by means of 
relative euler number.
An overtwisted disk is a $2$-disk equipped
with a certain germ of $3$-dimensional contact structure, for which the inequality fails.
A contact manifold is said to be tight if it contains no embedded overtwisted disks.
Then the inequality automatically holds for any $\Sigma$ (Eliashberg\cite{Eliashberg3}).
The $1$-jet space $J^1(\R,\R)$ for a function $f:\R\to\R$ is tight (Bennequin\cite{Bennequin}).
Thus all contact $3$-manifolds, which are modelled on $J^1(\R,\R)$, are locally tight.

Giroux\cite{GirouxConvex} smoothly approximated a given 
compact surface in a contact $3$-manifold by
a surface with certain transverse monotonicity,
i.e., a convex surface (see \S1.2 for the precise definition). 
Thus, in $3$-dimensional case, we may easily consider that the above inequality is only for convex surfaces. 
Contrastingly, in higher dimension, we show the following theorem.
\begin{thm}
In the case where $n>1$, any open subset of 
the $1$-jet space $J^1(\R^{n},\R)$,
which is the model space for contact $(2n+1)$-manifolds,
contains a hypersurface $\Sigma$ such that
\begin{enumerate}
\item $\Sigma$ violates the Thurston-Bennequin inequality, \quad and
\item no convex hypersurface smoothly approximates $\Sigma$.
\end{enumerate}
\end{thm}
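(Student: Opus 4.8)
The plan is to work in the standard model $(J^1(\mathbb{R}^n,\mathbb{R}),\xi)$ with contact form $\alpha=dz-\sum_{i=1}^n y_i\,dx_i$ in coordinates $(x_1,\dots,x_n,y_1,\dots,y_n,z)$, and to reduce the theorem to a single local construction. Every open subset contains a small ball, and the scaling $(x,y,z)\mapsto(\lambda x,\lambda y,\lambda^2 z)$ pulls $\alpha$ back to $\lambda^2\alpha$, hence preserves $\xi$; so any hypersurface built in a fixed ball can be shrunk into an arbitrary prescribed open set without changing its contact-geometric properties. Both conclusions of the theorem -- violation of the Thurston--Bennequin inequality and non-approximability by convex hypersurfaces -- are preserved under this transplantation. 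Thus I reduce to producing one explicit compact hypersurface $\Sigma\subset\mathbb{R}^{2n+1}$ with properties (1) and (2).

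For the construction I would exploit the diagonal $SO(n)$-action on $\mathbb{R}^n_x\times\mathbb{R}^n_y$, which preserves $\sum y_i\,dx_i$ and hence $\xi$; this is where $n>1$ enters, since only then does the symmetry act with positive-dimensional orbits. Starting from a planar profile in a three-dimensional Darboux slice -- chosen to look like the profile of an overtwisted disk, whose characteristic foliation on the would-be disk has a single interior singularity and a closed boundary leaf, the slice itself nevertheless remaining tight in accordance with Bennequin's theorem -- I would spin it by the symmetry to obtain $\Sigma$. The effect is that the characteristic foliation $\mathcal{F}=\ker(\alpha|_{T\Sigma})$ of $\Sigma$ is, up to sign, the product of the planar foliation with an orbit sphere of positive even Euler characteristic ($\chi=2$ in the even-dimensional case), so that the relative Euler number comparing $T\Sigma$ with the globally trivial bundle $\xi|_\Sigma$ gets multiplied by that even factor while the singular data stays concentrated on one region.

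The technical heart, and the step I expect to be the main obstacle, is part (1): the explicit computation showing that this relative Euler number strictly exceeds the bound $-\chi(\Sigma)$ imposed by the Thurston--Bennequin inequality. This requires locating the singularities of $\mathcal{F}$, computing their local indices with respect to the co-orientation determined by $\alpha$, and summing them against the fixed trivialization of $\xi|_\Sigma$ inherited from the global trivialization of $\xi$ over the ball; the cleanest route is a Poincar\'e--Hopf reduction on the profile, after which the violation becomes a numerical inequality driven by the positive even Euler characteristic of the orbit sphere. The contrast with $n=1$ is exactly that there the orbit degenerates to a point and Bennequin's theorem forbids the configuration.

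For part (2) I would argue by contradiction using the Thurston--Bennequin inequality for convex hypersurfaces. Both quantities entering the inequality -- the relative Euler number of the pair $(\xi|_\Sigma,T\Sigma)$ and the Euler characteristic $\chi(\Sigma)$ -- depend only on the smooth isotopy class of the embedded hypersurface together with the ambient $\xi$, and the signed index count defining the relative Euler number cannot jump under a $C^\infty$-small perturbation, since singularities of $\mathcal{F}$ may move or bifurcate only in cancelling pairs. Hence a convex hypersurface $\Sigma'$ that $C^\infty$-approximates $\Sigma$ would be smoothly isotopic to $\Sigma$ and would carry the same two numbers, so it would also violate the Thurston--Bennequin inequality; but by the inequality for convex hypersurfaces it must satisfy it, a contradiction. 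The delicate point to nail down here is precisely the stability of the signed count, which I would establish by a transversality argument for the section of the relevant bundle whose zeros are the singularities of $\mathcal{F}$.
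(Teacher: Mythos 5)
Your proposal has two genuine gaps, one in each half, and in both places it diverges from what actually makes the paper's proof work. For part (1), the spinning mechanism does not deliver the claimed violation: the orbit of the diagonal $SO(n)$-action through a profile point with $x,y$ collinear is $S^{n-1}$ (and a Stiefel manifold, not a sphere, at generic points), so your multiplicative factor $\chi(\text{orbit sphere})$ equals $0$ whenever $n$ is even --- in particular for $n=2$, the first case of the theorem --- and the ``numerical inequality driven by the positive even Euler characteristic'' evaporates exactly where the statement begins. You flag the computation as the main obstacle, and indeed it is missing; but more fundamentally you have no substitute for the paper's key input. The paper does not spin anything inside the tight $3$-dimensional slice: it builds $\Sigma$ explicitly inside $U\times B_\e^{2n-2}$, where $U$ is a neighbourhood of an overtwisted disk with the germ $\ker\{(2r^2-1)dz+r^2(r^2-1)d\theta\}$, exhibits an explicit vector field directing $\F_{\widetilde{\Sigma}}$ whose only negative singularity on $\Sigma$ is a single sink of index $1$ (so $\sum_{p\in S_-(\Sigma)}\mathrm{Ind}\,p=1>0$), and then transplants this into every open subset of $J^1(\R^n,\R)$ via the Lemma --- the existence of an embedded, topologically unknotted overtwisted contact $S^3$ in $J^1(\R^2,\R)$, a nontrivial external result from \cite{Mori2}. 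The overtwisted germ is what produces the violation; your construction never imports it.

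For part (2), your contradiction rests on the premise that convex hypersurfaces satisfy the Thurston--Bennequin inequality, and that premise is not a theorem --- in this paper it is a \emph{reformulation} ($\chi(\Sigma_-)\le 0$) which the convex overtwisted disk already violates in dimension $3$, and which in higher dimensions is precisely what the paper proposes to adopt as a \emph{definition} of tightness (in light of \cite{Mori1}, where convex TB-violating hypersurfaces do occur in other contact $5$-manifolds). If your deduction ``convex $\Rightarrow$ TB holds'' were available, the paper's concluding proposal would be vacuous; so the argument is circular where it is not simply false. The paper's actual proof of non-convexity is of a completely different nature: assuming $\widetilde{\Sigma}$ is (approximately) convex, the placement of the singular points $S_\pm$, the periodic orbits $P_\pm$, and the hyperbolic locus $H$ forces the dividing set $\Gamma\cap\widetilde{\Sigma}$ to contain a spherical component $S^{2n-1}$ alongside other components, and the Eliashberg--Floer--McDuff theorem \cite{McDuff} then forbids $S^{2n-1}\coprod(\text{other components})$ from bounding a connected convex symplectic manifold. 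Your observation that the signed index count is stable under $C^\infty$-small perturbation is reasonable as far as it goes (it would transfer the TB violation to an approximating hypersurface), but without a McDuff-type obstruction it proves nothing, since a convex hypersurface violating the inequality is not, by itself, a contradiction in dimension greater than three.
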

\noindent
This leads us to seriously restrict the inequality
to convex hypersurfaces (see \cite{Mori1} for a sequel).

\subsection{Thurston-Bennequin inequality}
Let $\Sigma$ be a compact oriented hypersurface
embedded in a positive contact manifold $(M^{2n+1},\alpha)$ 
$(\alpha\wedge (d\alpha)^n>0)$ which tangents to
the contact structure $\ker\alpha$ 
at finite number of interior points.
Let $S_+(\Sigma)$ (resp. $S_-(\Sigma)$) denote the set of positive
(resp. negative) tangent points.
With respect to the symplectic structure $d\alpha|\ker\alpha$,
the symplectic orthogonal of $T\Sigma\cap \ker\alpha$ defines 
a singular line field $L\subset T\Sigma$.
The integral foliation $\F_{\Sigma}$ of $L$ on $\Sigma$ is 
called the {\em characteristic foliation}.
The singularity of $L$ coincides with $S_+(\Sigma)\cup S_-(\Sigma)$.
The restriction $\gamma=\alpha|T\Sigma$ defines a 
holonomy invariant transverse contact structure
of $\F_\Sigma$ and determines the orientation of $L$
(i.e., $X\in L_{>0}\, \Longleftrightarrow\, \iota_X(^\exists dvol_\Sigma)=\gamma\wedge (d\gamma)^n
\, \Longrightarrow\, \gamma\wedge\LEE_X\gamma=0$).
We define the index $\mathrm{Ind} \, p=\mathrm{Ind}_L \, p$ 
of each tangent point $p\in S_\pm(\Sigma)$ by regarding it 
as a singular point of $L$. 
Assume that the boundary of each connected component of $\Sigma$ is
non-empty and $L$ is outwards transverse to $\partial\Sigma$. 
Then the boundary $\partial \Sigma$ is said to be {\em contact-type}.

The unit $2$-disk $D^2$ equipped with 
the germ of contact structure $\ker\{(2r^2-1)dz+r^2(r^2-1)d\theta\}$ 
is called an (the) overtwisted disk, where $(r,\theta,z)$ is the 
cylindrical coordinates of $D^2\times \R$. 
Slightly extending $D^2$, we obtain a disk with contact-type boundary
such that the singularity of the characteristic foliation is a single
negative sink point. A contact $3$-manifold is said to be
{\em overtwisted} or {\em tight} depending on whether it contains
an embedded overtwisted disk (with the same germ as above) or not.
Let $\Sigma$ be {\em any} surface with contact-type boundary
embedded in the $1$-jet space $J^1(\R,\R)(\approx \R^3)$
equipped with the canonical contact form.
Then Bennequin\cite{Bennequin} proved the following inequality
which immediately implies the tightness of $J^1(\R,\R)$:

\begin{TB}
$
\displaystyle \sum_{p\in S_-(\Sigma)}\mathrm{Ind}\,p \leq 0.
$
\end{TB}

\noindent
Eliashberg proved the same inequality for symplectically fillable
contact $3$-manifolds (\cite{Eliashberg2}),
and finally for all tight contact $3$-manifolds (\cite{Eliashberg3}).
The inequality can be written in terms of
relative euler number. Let $X$ be the above vector field on 
a hypersurface $\Sigma\subset (M^{2n+1},\alpha)$ with contact-type boundary.
Then, since $X\in T\Sigma \cap \ker\alpha$, we can regard $X$
as a section of $\ker\alpha|\Sigma$ which is canonical
near the boundary $\partial\Sigma$. Thus we can define the relative
eular number of $\ker\alpha|\Sigma$ by
$$
\langle e(\ker\alpha),\,[\Sigma,\partial\Sigma]\rangle
=\sum_{p\in S_+(\Sigma)}\mathrm{Ind}\,p-\sum_{p\in S_-(\Sigma)}\mathrm{Ind}\,p.
$$ 
Then the Thurston-Bennequin inequality may be expressed as 
$$
-\langle e(\ker\alpha),\,[\Sigma,\partial\Sigma]\rangle \leq -\chi(\Sigma).
$$
There is also an absolute version of the Thurston-Bennequin inequality
which is expressed as
$|\langle e(\ker\alpha),\,[\Sigma]\rangle|\leq -\chi(\Sigma)$,
or equivalently 
$$
\sum_{p\in S_-(\Sigma)}\mathrm{Ind}\,p \leq 0\quad\quad
\mathrm{and}\quad\quad
\sum_{p\in S_+(\Sigma)}\mathrm{Ind}\,p \leq 0
$$
for any closed hypersurface $\Sigma$ with $\chi(\Sigma)\leq 0$.
This holds if the euler class $e(\ker \alpha)$ is a torsion, 
especially if $H^{2n}(M;\R)=0$. 
Note that the inequality and its absolute version can be defined
for any oriented plane field on $M^3$
(see Eliashberg-Thurston\cite{EliashbergThurston}).
They are originally proved for codimension $1$ foliations on $M^3$ without Reeb components
by Thurston (see \cite{Thurston}). 

\subsection{Convex hypersurfaces}
A vector field $X$ on $(M^{2n+1},\alpha)$ is said to be {\em contact}
if the Lie derivative $\LEE_X\alpha$ vanishes on $\ker\alpha$.
Let $V$ denote the space of contact vector fields 
on $(M^{2n+1},\alpha)$. We can see that the linear map 
$\alpha(\cdot):V \to C^\infty(M^{2n+1})$
is an isomorphism. The function $\alpha(X)$ is
called the contact Hamiltonian function of $X$. 
A closed oriented hypersurface $\Sigma \subset (M^{2n+1},\alpha)$ is
said to be {\it convex} if there exists a contact vector field
$Y$ on a neighbourhood $\Sigma\times(-\e,\e)$ 
of $\Sigma=\Sigma\times\{0\}$ with 
$Y=\partial/\partial z$ ($z\in(-\e,\e)$), i.e., 
$Y$ is positively transverse to $\Sigma$ (Giroux\cite{GirouxConvex}).
By perturbing the contact Hamiltonian function if necessary, 
we may assume that $\Gamma=\{\alpha(Y)=0\}$ is a hypersurface 
transverse to $\Sigma$. Then $\Gamma$ separates
$\Sigma$ into the {\em positive region}
$\Sigma_+=\{\alpha(Y)\ge 0\}$ and the {\em negative region}
$\Sigma_-=-\{\alpha(Y)\le 0\}$ so that
$\Sigma=\Sigma_+\cup (-\Sigma_-)$.
Each interior $\mathrm{int}\,\Sigma_\pm$
has the positive exact symplectic form $\displaystyle d\left(\frac{\pm 1}
{\alpha(Y)}\alpha\right)\Big|\mathrm{int}\,\Sigma_\pm$.
We can modify the function
$\displaystyle \left| \frac{1}{\alpha(Y)} \right|: \Sigma\times(-\e,\e)
\to \R_{>0}\cup \{\infty\}$ 
near $\Gamma$ to obtain a function 
$f: \Sigma\times(-\e,\e)\to \R_{>0}$
such that $d(f\alpha)|\mathrm{int}\,\Sigma_\pm$ are 
symplectic and 
$f\alpha$ is $Y$-invariant. 
(This is the ``transverse monotonicity'' of $\Sigma$.)
Note that the dividing set $\Gamma\cap\Sigma$ is then the convex ends
of the exact symplectic manifolds $\mathrm{int}\,\Sigma_\pm$.

A {\em convex hypersurface with contact-type boundary} 
is a connected hypersurface $\Sigma$ which admits 
a transverse contact vector field $X$ such that, 
for the associated decomposition $\Sigma=\Sigma_+\cup (-\Sigma_-)$,
$\mathrm{int}\,\Sigma_\pm$ are also
convex exact symplectic manifolds, and the contact-type boundary 
$\partial\Sigma=\partial \Sigma_+ \setminus \partial \Sigma_-$ is non-empty.
(Changing $X$ if necessary, we can assume moreover that 
the dividing set $\Gamma\cap\Sigma$ contains $\partial\Sigma$.) 
Then the Thurston-Bennequin inequality 
can be expressed as follows.

\begin{TBC}
$
\displaystyle \chi(\Sigma_-)\leq 0.
$
\end{TBC}

Slightly extending the overtwisted disk $D^2$, we obtain a
convex disk $\Sigma$ which is the union $\Sigma_+\cup(-\Sigma_-)$ of 
a disk $\Sigma_-$ and an annulus $\Sigma_+$ surrounding $\Sigma_-$.
Then $\Sigma$ violates the Thurston-Bennequin inequality ($\chi(\Sigma_-)=1>0$),
and is called a convex overtwisted disk.
A possible higher dimensional overtwisted convex hypersurface
$\Sigma$ would also satisfy $\chi(\Sigma_-)>0$
and $\partial\Sigma_+\setminus\partial\Sigma_-\neq\emptyset$.
Particularly $\partial\Sigma_+$ would have to be disconnected (see \cite{Mori1}).

\section{Proof of Theorem}
We show the following Proposition. 
\begin{prop}
Let $(M^3,\alpha)$ be an overtwisted contact $3$-manifold 
and $B_\e^{2n-2}$ the $\e$-ball in $\R^{2n-2}$ $(0<\e\ll 1)$. 
Then there exist a closed hypersurface $\widetilde{\Sigma}$
and a hypersurface $\Sigma$ with contact-type boundary 
in the product contact $(2n+1)$-manifold
$$
\left(M^3\times B_\e^{2n-2}\ni(p,(x_1,y_1,\dots, x_{n-1},y_{n-1})),\,
\beta=\pi^*\alpha+\sum_{i=1}^{n-1}\left(x_idy_i-y_idx_i\right)\right)
$$
such that $\widetilde{\Sigma}$ and $\Sigma$ are not convex, 
$\Sigma\subset\widetilde{\Sigma}$, 
and $\Sigma$ violates the Thurston-Bennequin inequality, 
where $\pi$ denotes the natural projection to $M^3$.
\end{prop}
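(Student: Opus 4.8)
The plan is to build the required $\Sigma$ as a product of Bennequin's extended overtwisted disk with the symplectic ball, and to read off the violation from a single, explicitly linearizable tangency. Write $D_0\subset(M^3,\alpha)$ for the slightly extended overtwisted disk of \S1.1: a $2$-disk with contact-type boundary whose characteristic foliation has exactly one singular point $p_0$, a negative sink. First I would set $\Sigma=D_0\times B_\e^{2n-2}$ and round the corner along $\partial D_0\times\partial B_\e^{2n-2}$, obtaining a smoothly embedded $2n$-disk; the closed hypersurface $\widetilde{\Sigma}$ is produced at the end by closing this model up. One verifies directly that $\beta$ is a positive contact form: as $(\pi^*d\alpha)^2=0$ on $M^3$ and $(2\sum dx_i\wedge dy_i)^{n}=0$ on $B_\e^{2n-2}$, only the cross term survives, giving $\beta\wedge(d\beta)^n=n\,\pi^*(\alpha\wedge d\alpha)\wedge(2\sum dx_i\wedge dy_i)^{n-1}>0$.

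Next I would locate the tangencies and compute the relevant index. Since $\beta(\partial_{x_j})=-y_j$ and $\beta(\partial_{y_j})=x_j$, the tangent space $T\Sigma=TD_0\oplus\R^{2n-2}$ can lie in $\ker\beta$ only over the center $\{(x,y)=0\}$, where $\beta=\pi^*\alpha$ and the condition reduces to a tangency of $D_0$ in $M^3$; hence $(p_0,0)$ is the unique tangent point of $\Sigma$ and $S_+(\Sigma)=\emptyset$. In sink coordinates $(u,v)$ on $D_0$ one has $\alpha|_{TD_0}=v\,du-u\,dv+O(2)$, so $\gamma=\beta|_{T\Sigma}=v\,du-u\,dv+\sum(x_i\,dy_i-y_i\,dx_i)+O(2)$ is, to leading order, the radial Liouville form of the constant symplectic form $d\gamma=-2\,du\wedge dv+2\sum dx_i\wedge dy_i$ on $\R^{2n}$. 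The characteristic field defined by $\iota_X(d\mathrm{vol}_\Sigma)=\gamma\wedge(d\gamma)^{n-1}$ is then proportional to the radial field, so $(p_0,0)$ is a nondegenerate sink of index $+1$; comparing the orientation of $T\Sigma$ with the symplectic orientation of $\ker\beta$, in which the $D_0$-factor contributes the negative sign of $p_0$ and the positively oriented $B_\e^{2n-2}$-factor contributes $+1$, shows $(p_0,0)\in S_-(\Sigma)$. Thus $\sum_{p\in S_-(\Sigma)}\mathrm{Ind}\,p=1>0$ and $\Sigma$ violates the Thurston--Bennequin inequality, in agreement with the Poincar\'e--Hopf count $\sum_{S_\pm}\mathrm{Ind}\,p=\chi(\Sigma)=1$.

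The hard part, and the one point where $n>1$ is essential, is to rule out convexity. Suppose $\Sigma$ were convex, with $\Sigma=\Sigma_+\cup(-\Sigma_-)$ into convex exact symplectic pieces. The tangencies are intrinsic, so in any such structure $S_-(\Sigma)=\{(p_0,0)\}$ and $S_+(\Sigma)=\emptyset$, and the negative tangency is the unique zero of the Liouville field of $\mathrm{int}\,\Sigma_-$, of Morse index $0$. A connected convex exact symplectic manifold whose Liouville field has a single index-$0$ zero is the standard ball, so $\Sigma_-\cong D^{2n}$, whence $\Sigma_+=\Sigma\setminus\mathrm{int}\,\Sigma_-\cong S^{2n-1}\times[0,1]$ deformation retracts onto $S^{2n-1}$. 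But $\Sigma_+$ is a convex exact symplectic $2n$-manifold with two convex ends ($\partial\Sigma$ and the dividing set $\partial\Sigma_-$), hence retracts under its backward Liouville flow onto a skeleton of dimension at most $n$; for $n>1$ this contradicts $\Sigma_+\simeq S^{2n-1}$, since $2n-1>n$. I expect this skeleton-dimension step to be the crux: one must fix the precise meaning of ``convex exact symplectic manifold'' in the sense of \S1.2 and verify the homotopical dimension bound it entails (classical for Weinstein fillings, and the structural point of \cite{Mori1}).

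Finally I would obtain $\widetilde{\Sigma}\supset\Sigma$ by extending $D_0$ to a closed surface in $M^3$ and capping the $B_\e^{2n-2}$-directions, choosing the closed model so that $(p_0,0)$ remains its only negative tangency and, crucially, so that the complement in $\widetilde{\Sigma}$ of the negative-region ball still carries homology in degree $2n-1$; the skeleton-dimension argument then excludes a convex structure on $\widetilde{\Sigma}$ as well, although pinning down an admissible closed model is more delicate and is exactly where I would lean on \cite{Mori1}. Two further verifications remain, both routine but necessary: that the corner-rounding and the closing-up create no new tangencies and keep the characteristic line field $L$ outward-transverse along $\partial\Sigma$, and that the orientation bookkeeping fixing the sign of $(p_0,0)$ is as stated. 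The decisive ingredient, though, is the symplectic-topological dimension bound for convex exact symplectic manifolds, which is precisely what makes the phenomenon genuinely higher-dimensional.
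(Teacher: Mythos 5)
Your construction has a fatal flaw at its first step: the product $\Sigma=D_0\times B_\e^{2n-2}$ (with corner rounded) does \emph{not} have contact-type boundary, and this cannot be repaired by any rounding. At a point $(q,w)$ with $q$ a regular point of $\F_{D_0}$ and $w\neq 0$, a direct computation of the symplectic orthogonal of $T\Sigma\cap\ker\beta$ shows that the characteristic line field of the product is spanned (up to positive factor) by $X_1+\frac{g}{2}\sum_{i}\left(x_i\partial_{x_i}+y_i\partial_{y_i}\right)$, where $X_1$ is the characteristic field of $D_0$ and $g$ is its divergence with respect to $\gamma=\alpha|TD_0$, i.e.\ $\LEE_{X_1}\gamma=g\gamma$: indeed, testing against $Y=(0,b)$ with $b$ $\omega_0$-orthogonal to $w$ forces the $B_\e$-component of $X$ to be radial, and testing against general $Y=(a,b)\in T\Sigma\cap\ker\beta$ gives the radial coefficient $g/2$ via $d\gamma(X_1,a)=g\,\gamma(a)$. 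Near the negative sink $p_0$ one has $g<0$ (in your own model $\gamma=v\,du-u\,dv$, $\LEE_{X_1}\gamma=-2\gamma$), so along $\{g<0\}\times\partial B_\e$ the line field points radially \emph{inward}, and it is tangent to the boundary along $\{g=0\}\times\partial B_\e$. Since this failure occurs over interior points of $D_0$, far from the corner $\partial D_0\times\partial B_\e$, rounding is irrelevant; your $\Sigma$ simply is not a hypersurface with contact-type boundary, so the index count, however correct pointwise, does not produce a violation of the Thurston-Bennequin inequality. This is exactly why the paper does not use a product: it takes the ellipsoid-type hypersurface $r^2+\e^{-2}\bigl(z^2+\sum(x_i^2+y_i^2)\bigr)=1+\e$ inside the overtwisted model, cuts it along $r-z\le 1$ to get $\Sigma\approx D^2\times S^{2n-2}$ (with $\chi(\Sigma)=2$, one source \emph{and} one sink), and verifies outward transversality explicitly via $(dr-dz)(X)>0$ on $\partial\Sigma$.

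Your non-convexity argument also rests on a false crux. The pieces $\Sigma_\pm$ of a Giroux-convex hypersurface are only convex \emph{exact} symplectic (Liouville) manifolds, not Weinstein ones: the Liouville field need not be gradient-like, so neither ``a single index-$0$ zero implies the ball'' nor the skeleton-dimension bound $\le n$ holds---McDuff's paper, the very reference cited here, together with later examples of Geiges and Mitsumatsu, provides Liouville domains with disconnected boundary whose skeleta have dimension $2n-1>n$; boundary-connected-summing such a domain with a ball defeats your ``unique zero'' claim. The paper's actual mechanism is different and genuinely needed: it analyzes the dynamics of $\F_{\widetilde{\Sigma}}$ (the periodic orbits $P_\pm$ and the invariant set $H$ over the hyperbolic singularity of the quotient foliation) to show that the dividing set of any putative convex structure would have to contain a \emph{sphere} component, and then invokes the Eliashberg--Floer--McDuff theorem that a connected convex symplectic manifold cannot have contact-type boundary $S^{2n-1}\sqcup(\text{nonempty rest})$; a holomorphic-curve input, not a handle-dimension count, is what makes the argument work. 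Finally, note that you never actually construct the closed hypersurface $\widetilde{\Sigma}$ (you defer it to \cite{Mori1}), whereas the Proposition asserts its existence; in the paper $\widetilde{\Sigma}$ and $\Sigma$ come simultaneously from the same ellipsoid.
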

\begin{proof}
Let $(r,\theta,z)$ be the cylindrical coordinates of $\R^3$, and consider the functions
$$
\lambda(r)=2r^2-1 \quad 
\mathrm{and} \quad
\mu(r)=r^2(r^2-1).
$$
Then we see that the contact structure on $\R^3$ defined by the contact form
$$
\alpha'=\lambda(r)dz+\mu(r)d\theta
$$
is overtwisted. 
An overtwisted disk in $(M^3,\alpha)$ has a neighbourhood 
which is contactomorphic to $U=\{\e^{-2}z^2+r^2<1+2\e\}\,\subset (\R^3,\alpha')$. 
Thus, by using the formula
$$
f^2\sum_{i=1}^{n-1}(x_idy_i-y_idx_i)=\sum_{i=1}^{n-1}(fx_id(fy_i)-fy_id(fx_i)) 
\quad 
\left(\forall f \in C^\infty\left(M^3\times\R^{2n-1}\right)\right),
$$
we can replace $(M^3, \alpha)$ in Proposition with $(U, \alpha'|U)$. 
Then we take the hypersurface
$$
\widetilde{\Sigma}=\left\{
(z,r,\theta,x_1,y_1,\dots,x_{n-1},y_{n-1})
\Mid
r^2+\e^{-2}\left(z^2+\sum_{i=1}^{n-1}(x_i^2+y_i^2)\right)=1+\e
\right\}
$$ 
and its subset
$$
\Sigma=\left\{
(z,r,\theta,x_1,y_1,\dots,x_{n-1},y_{n-1})\in \widetilde{\Sigma}
\Mid
r-z\le 1
\right\}.
$$ 
We orient $\widetilde{\Sigma}$ so that the characteristic foliation $\F_{\widetilde{\Sigma}}$ 
is presented by the vector field
$$\begin{array}{rl}
X\hspace{-3mm}
&= \displaystyle
\e^{-2}r(r^2-1)z\partial_r
+(1+2\e-2\e^{-2}z^2)\partial_\theta
+\left\{
(r^2-1)^2+(2r^2-1)(\e^{-2}z^2-\e)
\right\}\partial_z
\\ & \displaystyle
\quad +\,\e^{-2}(2r^2-1)z\sum_{i=1}^{n-1}
\left(
x_i\partial_{x_i}+y_i\partial_{y_i}
\right)
+\e^{-2}(2r^4-2r^2+1)\sum_{i=1}^{n-1}
\left(
-y_i\partial_{x_i}+x_i\partial_{y_i}
\right).
\end{array}
$$
Indeed the following calculations shows that 
the vector field $X$ satisfies $X \in T\widetilde{\Sigma}$,
$(\beta|T\widetilde{\Sigma})(X)=0$, and 
$\mathcal{L}_X (\beta|T\widetilde{\Sigma})
=2\e^{-2}(2r^2-1)z\beta|T\widetilde{\Sigma}$.
$$
\begin{array}{l}
\displaystyle
\left\{
2rdr+\e^{-2}\left(2zdz+2\sum_{i=1}^{n-1}(x_idx_i+y_idy_i)\right)
\right\}
(X)
\\
\displaystyle
= 2\e^{-2}(2r^2-1)z
\left\{
r^2+\e^{-2}
\left(
z^2+\sum_{i=1}^{n-1}(x_i^2+y_i^2)
\right)-1-\e
\right\},
\end{array}
$$
$$
\beta=(2r^2-1)dz+r^2(r^2-1)d\theta+\sum_{i=1}^{n-1}\left(x_idy_i-y_idx_i\right),
$$
$$
\begin{array}{rl}
\beta(X)
& \displaystyle
=(2r^2-1)\left\{
(r^2-1)^2+(2r^2-1)(\e^{-2}z^2-\e)
\right\}
\\ 
& \displaystyle
\quad +r^2(r^2-1)\{1-2(\e^{-2}z^2-\e)\}+\e^{-2}(2r^4-2r^2+1)\sum_{i=1}^{n-1}(x_i^2+y_i^2)
\\ 
& \displaystyle
=(2r^4-2r^2+1)
\left\{
r^2+\e^{-2}
\left(
z^2+\sum_{i=1}^{n-1}(x_i^2+y_i^2)
\right)-1-\e
\right\},
\end{array}
$$
$$
d\beta=4rdr\wedge dz+2r(2r^2-1)dr\wedge d\theta+2\sum_{i=1}^{n-1}dx_i\wedge dy_i,
$$
$$
\begin{array}{rl}
\iota_Xd\beta
& =\e^{-2}r(r^2-1)z(4rdz+2r(2r^2-1)d\theta)-2r(2r^4-2r^2+1)dr
\\
& \displaystyle
\quad +2\e^{-2}(2r^2-1)z\sum_{i=1}^{n-1}(x_idy_i-y_idx_i)
-2\e^{-2}(2r^4-2r^2+1)\sum_{i=1}^{n-1}(x_idx_i+y_idy_i)
\\ 
& \displaystyle
=2\e^{-2}(2r^2-1)z\beta-(2r^4-2r^2+1)
\left\{
2rdr+\e^{-2}
\left(
2zdz+2\sum_{i=1}^{n-1}(x_idx_i+y_idy_i)
\right)
\right\}.
\end{array}
$$
We see that the vector field $X$ also satisfies
$$
(dr-dz)(X)|\partial\Sigma=(r-1)^2\{\e^{-2}(-r^2+r+1)-(r+1)^2\}+\e(2r^2-1)
>0
$$
with attention to $z=r-1$. 
Thus $\partial \Sigma$ is contact-type. 
The singularity of $X|\Sigma$ is the union of 
$$
S_+(\Sigma)=\{((0,\theta,-\e\sqrt{1+\e}),0) \in U\times B_\e^{2n-2}\}
$$
and 
$$
S_-(\Sigma)=\{((0,\theta,+\e\sqrt{1+\e}),0) \in U\times B_\e^{2n-2}\}.
$$
They are respectively a source point and a sink point (see Figure 1).
\begin{figure}[h]
\centering
\includegraphics[height=55mm]{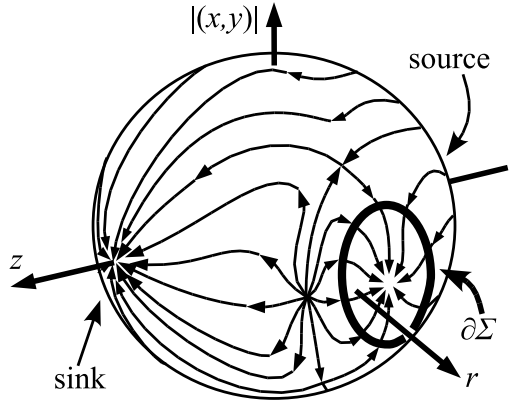}
\caption{$\F_\Sigma$ on $\Sigma (\approx S^{2n}\setminus N(S^1)\approx D^2\times S^{2n-2})$}
\end{figure}
Since the indices of these points are equal to $1$, 
the hypersurface $\Sigma$ violates the Thurston-Bennequin inequality.

Precisely, Figure 1 depicts (the fourfold covering of)
the well-defined push-forward $X'$ of $X$ under the natural projection $p$ from $\widetilde{\Sigma}$
to the quarter-sphere 
$$
\Sigma'=\left\{(z,r,|(x,y)|) \Mid r^2+\e^{-2}(z^2+|(x,y)|^2)=1+\e\right\} \quad (r\ge 0, \, |(x,y)|\ge 0).
$$
The vector field $X'$ defines the singular foliation
$\F'=\{\e^{-2}z^2=(Cr^2-1)(r^2-1)+\e\}_{-\infty \le C \le +\infty}$.
The singularity consists of the following five points; two (quarter-)elliptic points
$\left(\mp\e\sqrt{1+\e},0,0\right)$ whose preimages under $p$ are the above singular points;
other two (half-)elliptic points $\left(\pm\e\sqrt{\e},1,0\right)$ whose preimages are the periodic orbits 
$P_\pm=\{\pm\e\sqrt{\e}\}\times S^1(1)\times\{0\}\subset \widetilde{\Sigma} \subset \R \times \R^2 \times \R^{2n-2}$ 
of $X$ $(P_+\subset\Sigma, \, P_-\subset \widetilde{\Sigma}\setminus \Sigma)$;
and a hyperbolic point $\left(0,r_0=\sqrt{1+\e-\sqrt{\e(1+\e)}},n_0=\sqrt{\e^2\sqrt{\e(1+\e)}}\right)$, 
which is the self-intersection of the leaf corresponding to $C=1+2\e+2\sqrt{\e(1+\e)}$.
Slightly changing $\e$ if necessary, we may assume that the preimage 
$H=p^{-1}(\{(0,r_0,n_0)\})=\{0\}\times S^1(r_0) \times S^{2n-1}(n_0)$ of the hyperbolic 
singular point of $X'$
is a union of periodic orbits of $X$.

Now we assume that $\widetilde{\Sigma}$ is (approximately) convex
in order to prove Proposition by contradiction.
The assumption implies
$$
S_i(\Sigma), \, P_i \subset \widetilde{\Sigma}_i \quad (i=+,-)
\quad\textrm{and}\quad
\Gamma \cap H=\emptyset
$$
where $\widetilde{\Sigma}_\pm$ are the $\pm$ regions of 
the convex surface $\widetilde{\Sigma}$ devided by the transverse 
intersection with the level set $\Gamma$ of the contact Hamiltonian function
described in \S 1.2.
Then we can see that the intersection $\Gamma\cap\widetilde{\Sigma}$
contain a spherical component.
However the Eliashberg-Floer-McDuff theorem implies that 
$S^{2n-1}\coprod$(other components) can not be realized as the boundary of 
a connected convex symplectic manifold (see McDuff\cite{McDuff}). 
This contradiction proves Proposition. 
Here we omit a similar proof of the non-convexity of 
the Seifert hypersurface $\Sigma$.
\end{proof}
Theorem in \S1 is deduced from Proposition and the following easy lemma (see \cite{Mori2} for a proof).
\begin{lem}
There exists an embedded overtwisted contact $S^3$ topologically unknotted in $J^1(\R^2,\R)$.
\end{lem}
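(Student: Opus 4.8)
The plan is to produce the overtwisted contact $S^3$ by realizing the explicit overtwisted ball $U\subset(\R^3,\alpha')$ from the proof above as a contact submanifold of $J^1(\R^2,\R)$ and then capping it off inside a Darboux ball. Write the canonical contact form as $\alpha_J=dz-y_1\,dx_1-y_2\,dx_2$ on $J^1(\R^2,\R)=\{(x_1,x_2,y_1,y_2,z)\}$, and recall $\alpha'=(2r^2-1)dZ+r^2(r^2-1)d\theta$, where I write $Z$ for the vertical coordinate of the overtwisted $\R^3$ to distinguish it from $z$.

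First I would construct an explicit isocontact embedding $\iota$ of $U$. Using complex coordinates $w=x_1+ix_2$ and $\zeta=y_1+iy_2$, so that $y_1\,dx_1+y_2\,dx_2=\Re(\bar\zeta\,dw)$, I take the ansatz $w=a(r)e^{i\theta}$, $\zeta=(b_1(r,Z)+ib_2(r,Z))e^{i\theta}$, $z=c(r,Z)$. A direct computation gives $\iota^*\alpha_J=(c_r-b_1a_r)dr+(c_Z-b_1a_Z)dZ-ab_2\,d\theta$, and I would choose $a,b_1,b_2,c$ so that this equals $g\,\alpha'$ for a positive function $g$; this amounts to solving $c_r=b_1a_r$, $c_Z-b_1a_Z=g(2r^2-1)$ and $ab_2=-g\,r^2(r^2-1)$, which can be done in closed form (e.g.\ with $a,g$ functions of $r$ alone). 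Since $\iota^*\alpha_J=g\,\alpha'$ with $g>0$, the pullback is automatically a contact form, so $\iota(U)$ is a contact submanifold carrying the overtwisted germ. The one point requiring care is injectivity of $\iota$ where $2r^2-1$ vanishes: there the $dZ$-coefficient of $\iota^*\alpha_J$ degenerates, but one checks that $\partial_Z b_1=[g(2r^2-1)]'/a'$ stays nonzero at $r=1/\sqrt{2}$, so $Z$ is still recovered and $\iota$ remains an embedding.

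It remains to close $\iota(U)$ up into a sphere and to record its topology. I would extend $\iota$ across $\partial U$ by gluing a standard tight contact $3$-ball whose boundary germ matches that of $\iota(U)$, obtaining an embedded $S^3$ that is a contact submanifold of a Darboux ball $B^5\subset J^1(\R^2,\R)$; since this $S^3$ still contains the overtwisted disk inherited from $U$, its induced contact structure is overtwisted. One arranges that $S^3$ bounds a smoothly embedded $4$-ball in $B^5$, hence is topologically unknotted, and its conformal symplectic normal bundle is trivial because rank-$2$ bundles over $S^3$ are classified by $\pi_2(SO(2))=0$; thus a neighbourhood is the product $S^3\times B^2_\e$ with the contact form $\beta$ of the Proposition, which is exactly what feeds the Theorem. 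The main obstacle is precisely this capping step: one must interpolate between the overtwisted model near $\partial U$ and a standard contact ball while keeping $\iota^*\alpha_J\wedge d(\iota^*\alpha_J)\neq0$ throughout and introducing no self-intersection. Alternatively one can bypass the explicit interpolation by appealing to the $h$-principle for isocontact embeddings in codimension two, but the explicit route keeps the construction in line with the computational proof of the Proposition.
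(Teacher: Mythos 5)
Your first step is sound, and in fact your ansatz closes up in explicit form: taking $a(r)=r$, $g\equiv 1$, $c_0\equiv 0$ yields $w=re^{i\theta}$, $\zeta=r\bigl(4Z-i(r^2-1)\bigr)e^{i\theta}$, $z=(2r^2-1)Z$, for which $\iota^*\alpha_J=\alpha'$ exactly; injectivity at $r=1/\sqrt{2}$ holds just as you say, since $\partial_Z b_1=4r\neq 0$ there while $c_Z=2r^2-1$ recovers $Z$ elsewhere. So the open overtwisted ball $U$ does admit an isocontact embedding into $J^1(\R^2,\R)$. The genuine gap is the step you yourself flag as the main obstacle: closing $\iota(U)$ up into an embedded \emph{closed} contact $S^3$. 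This is not a routine interpolation that can be deferred --- it is the entire content of the Lemma. Your fallback does not exist: there is no $h$-principle for isocontact embeddings in codimension two. Gromov's embedding $h$-principle (see Eliashberg--Mishachev) requires codimension at least four, and in codimension two closed contact embeddings are demonstrably rigid: already for $S^1\hookrightarrow (S^3,\xi_{std})$, i.e.\ transverse knots, the Bennequin inequality bounds the self-linking number and refutes the formal prediction. Whether an overtwisted contact $S^3$ embeds into the standard contact $\R^5$ at all was precisely the nontrivial question here, so a proof must actually produce the cap: a tight ball matching the germ of the induced structure along $\partial\iota(U)$, embedded in the complement, and nothing in your outline constructs it. Your unknottedness claim (``one arranges that $S^3$ bounds a $4$-ball'') is likewise only asserted, and it is a real condition, since smoothly knotted $3$-spheres in $\R^5$ exist (e.g.\ twice twist-spun knots).

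For comparison, the paper does not prove the Lemma by capping a local model at all: it cites \cite{Mori2}, where one starts from the standard (tight) $S^3\subset S^5$ and deforms it through embedded spheres until the induced contact structure becomes overtwisted, the Reeb foliation appearing as a limit of the deformation. That route delivers closedness and topological unknottedness for free, because the sphere stays smoothly isotopic to the standard one throughout; the overtwistedness is what must be verified at the end. Your remarks on the conformal symplectic normal bundle (trivial since $\pi_2(SO(2))=0$) and on obtaining the product neighbourhood $S^3\times B_\e^2$ with the form $\beta$ of the Proposition are correct and are indeed the right way to connect the Lemma to the Theorem, but they only come into play once the closed embedded sphere exists, which your argument does not yet provide.
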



\begin{thebibliography}{AAA}
\bibitem{Bennequin} D. Bennequin: 
  {\it Entrelacements et \'{e}quations de Pfaff}, 
  Ast\'{e}risque, 
  {\bf 107-108} (1983), 83--161.
\bibitem{Eliashberg2} Y. Eliashberg: 
  {\it Filling by holomorphic discs and its applications}, 
  Geometry of low-dimensional manifolds 2, London Math. Soc. Lect. Note Ser. 
  {\bf 151}(1990), 45--72.
\bibitem{Eliashberg3} Y. Eliashberg: 
  {\it  Contact 3-manifolds twenty years since J. Martinet's work}, 
  Ann. Inst. Fourier, Grenoble,
  {\bf 42} (1991), 165--192.
\bibitem{EliashbergThurston} Y. Eliashberg and W. Thurston: 
  {\it Confoliations}, 
  A.M.S. University Lecture Series, {\bf 13} (1998). 
\bibitem{GirouxConvex} E. Giroux: 
  {\it Convexit\'e en topologie de contact}, 
  Comm. Math. Helv. 
  {\bf 66} (1991), 637--677.
\bibitem{McDuff} D. McDuff:
  {\it Symplectic manifolds with contact type boundaries}, 
  Invent. Math. {\bf 103}(1991), 651--671.
\bibitem{Mori1} A. Mori:
  {\it Reeb foliations on $S^5$ and contact $5$-manifolds 
  violating the Thurston-Bennequin inequality}, 
  preprint (2009), arXiv:0906.3237.
\bibitem{Mori2} A. Mori: 
  {\it The Reeb foliation arises as a family of Legendrian submanifolds
at the end of a deformation of the standard $S^3$ in $S^5$},
  preprint (2011).
\bibitem{Thurston}W. Thurston: 
  {\it Norm on the homology of $3$-manifolds}, 
  Memoirs of the AMS, {\bf 339} (1986), 99--130. 
\end{thebibliography}
\end{document}